\documentclass[twocolumn]{article}

\usepackage{microtype}
\usepackage{graphicx}
\usepackage{subcaption}
\usepackage{booktabs}
\usepackage{stmaryrd}
\usepackage{amsmath}
\usepackage{amssymb}
\usepackage{mathtools}
\usepackage{amsthm}
\usepackage{hyperref}
\usepackage[capitalize,noabbrev]{cleveref}
\usepackage{xcolor}
\usepackage{listings}
\usepackage{natbib}

\theoremstyle{plain}
\newtheorem{theorem}{Theorem}[section]

\newtheorem{lemma}[theorem]{Lemma}

\theoremstyle{definition}

\theoremstyle{remark}
\newtheorem{remark}[theorem]{Remark}

\definecolor{codegreen}{rgb}{0,0.6,0}
\definecolor{codegray}{rgb}{0.5,0.5,0.5}
\definecolor{codepurple}{rgb}{0.58,0,0.82}
\definecolor{backcolour}{rgb}{0.95,0.95,0.92}
\definecolor{bluekeywords}{rgb}{0,0,1}

\lstdefinelanguage{lean}{morekeywords={def, theorem, lemma, example, definition, structure, instance,
		inductive, match, with, where, let, in, if, then, else,
		forall, fun, $\lambda$, have, show, from, by, begin, end,
		import, open, namespace, section, variable, variables,
		noncomputable, @[simp], @[ext], @[class], @[instance],
		deriving, extends, of, sorry, axiom, constant,
		protected, private, mutual, abstract},
	sensitive=true,
	morecomment=[l]{--},
	morecomment=[s]{/-}{-/},
	morestring=[b]",
	morestring=[b]',
	morestring=[b]"""}  
\lstdefinestyle{leanstyle}{backgroundcolor=\color{backcolour},
	frame=single,
	rulecolor=\color{black},
	commentstyle=\color{codegreen},
	keywordstyle=\color{bluekeywords},
	numberstyle= \tiny\color{codegray},
	stringstyle=\color{codepurple},
	basicstyle=\ttfamily\footnotesize,
	breakatwhitespace=false,
	breaklines=true,
	captionpos=b,
	keepspaces=true,
	numbers=none,
	numbersep=5pt,
	showspaces=false,
	showstringspaces=false,
	showtabs=false,
	tabsize=2,
	alsoletter={_},
	columns=fullflexible,
	mathescape=true, 
}
\title{Formalizing Extended Complex Numbers, M\"obius Transformations, \\
	and Cross Ratio in Lean 4}

\author{Fubin YAN \\
	School of Science and Engineering, \\
	The Chinese University of Hong Kong, Shenzhen \\
	\texttt{fubinyan@link.cuhk.edu.cn}
	\and
	Kenneth SHUM \\
	School of Science and Engineering, \\
	The Chinese University of Hong Kong, Shenzhen \\
	\texttt{wkshum@cuhk.edu.cn}}

\begin{document}

\maketitle

\begin{abstract}
	The extended complex plane is a fundamental object in complex analysis, hyperbolic geometry, and mathematical physics. Its geometry is governed by M\"obius transformations, with the cross ratio serving as a central invariant. We present a formalization of these concepts in the Lean~4 theorem prover. The extended complex plane is represented using Mathlib's Option type over $\mathbb{C}$, where the additional element represents the point at infinity. On this foundation, we define M\"obius transformations, their action on the extended complex plane, and the cross ratio. We formalize several basic properties of M\"obius transformations, including their group structure, and identify them with a projective general linear group. We also prove the uniqueness of a M\"obius transformation mapping any three distinct points to any other three distinct points, and the invariance of the cross ratio. The complete development comprises approximately 6,000 lines of Lean code, including about 40 definitions and 150 lemmas and theorems. This work provides a verified foundation for future formalizations of conformal geometry, hyperbolic models, modular forms, and applications in mathematical physics.
\end{abstract}

\section{Introduction}

Complex analysis is a fundamental area of mathematics, with numerous applications in both pure mathematics and engineering. Standard textbooks on complex analysis, such as~\cite{Ahlfors, BrownChurchill, SteinShakarchi}, often emphasize the rich geometric intuition underlying the subject. However, the formalization of complex analysis in an interactive theorem prover can look quite different from its usual presentation in the classroom. In this paper, we extend the complex plane by adding a point at infinity, and investigate an automorphism group using the Lean 4 proof assistant.

We denote the \emph{extended complex plane} by
\[
\mathbb{C}_\infty := \mathbb{C} \cup \{\infty\}.
\]
The additional point $\infty$ is called the \emph{point at infinity}. Through stereographic projection, the points of the complex plane are in one-to-one correspondence with the points of the unit sphere in three-dimensional space, except for the north pole, which corresponds to the point at infinity. For this reason, the extended complex plane is also known as the \emph{Riemann sphere}, which is a one-point compatification of the complex plane, and is the first nontrivial example of compact Riemann surface~\cite{FarkasKra}.

With the point at infinity adjoined, the geometric model of the Riemann sphere unifies lines and circles: under stereographic projection, every line or circle in the complex plane is mapped to a circle on the sphere. In particular, a line in the complex plane corresponds to a circle on the Riemann sphere passing through the point at infinity. This circle-line geometry is naturally preserved
by M\"obius transformations, which we define below.

A \emph{M\"obius transformation}, also called a \emph{linear fractional transformation}, is a function of the form
\[
f(z) = \frac{az+b}{cz+d},
\]
where $a$, $b$, $c$, and $d$ are complex numbers satisfying $$ad-bc \neq 0.$$ 
The four coefficients of a M\"obius transformation can be arranged into the
coefficient matrix
\begin{equation}
	\begin{pmatrix}
		a & b \\
		c & d
	\end{pmatrix}.
	\label{eq:coefficient-matrix}
\end{equation}
Its determinant is
$ad-bc$.
The non-degeneracy condition $ad-bc \neq 0$ is precisely the condition that
this matrix is invertible.

To regard such a transformation as a total function from $\mathbb{C}_\infty$ to
$\mathbb{C}_\infty$, we extend its definition at the point where the denominator vanishes and at infinity. When $c \neq 0$, we set
\[
f(-d/c)=\infty
\qquad\text{and}\qquad
f(\infty)=a/c.
\]
When $c=0$, the point at infinity is mapped to itself.

We note that multiplying all four coefficients $a,b,c,d$ by the same nonzero complex number gives the same transformation. Thus a M\"obius transformation depends only on the projective equivalence class of its coefficient matrix.

The class of M\"obius transformations includes several important special cases:
\begin{enumerate}
	\item translations $f(z)=z+b$, where $b \in \mathbb{C}$ is a complex constant;
	\item dilations $f(z)=az$, where $a>0$ is a positive real constant;
	\item rotations $f(z)=\omega z$, where $|\omega|=1$;
	\item inversion $f(z)=1/z$.
\end{enumerate}
For translations, dilations, and rotations, the point at infinity is fixed. In
contrast, inversion interchanges $0$ and $\infty$.

As maps from $\mathbb{C}_\infty$ to itself, M\"obius transformations form a group under composition. They map generalized circles to generalized circles, where generalized circles are the ordinary circles and lines in the complex
plane. They also preserve the \emph{cross ratio}, a fundamental invariant in projective geometry and complex analysis. M\"obius transformations appear in many areas of mathematics and physics, including complex analysis, hyperbolic geometry and the Poincar\'e disk model, conformal field theory, and special relativity.

Although the formalization of complex analysis in interactive theorem provers has made significant progress, the extended complex plane and the M\"obius group have not yet been extensively developed in Lean 4. This paper takes an initial
step toward filling this gap.

The main constructions and results of this paper are as follows.

\begin{itemize}
	\item We define the extended complex numbers $\mathbb{C}_\infty$ using Lean's \texttt{Option} type, and develop arithmetic lemmas for extended complex numbers and linear fractional transformations. These lemmas cover addition, multiplication, inversion, and the treatment of the point at infinity.
	
	\item We formalize linear fractional transformations, which are  also known as M\"obius transformations, and prove the basic algebraic properties for composition, inverses, and identity, thereby establishing that they form a group, as an instance of the \texttt{Group} typeclass.
	
	\item We prove that two M\"obius transformations act identically on all extended complex numbers if and only if their coefficient matrices are proportional. We also prove that there exists a \emph{unique} M\"obius transformation mapping any three distinct points to any other three distinct points, and provide an explicit construction.

	\item We construct the quotient group \(\operatorname{PSL}(2,\mathbb{C})\), which is the group of linear fractional transformations modulo proportionality, as a formal instance of the \texttt{Group} typeclass. We also construct a subgroup isomorphic to $S_4$, the symmetric group on four elements.
	
	\item We define the cross ratio in a way that covers all cases in which some of the points may be equal to infinity, and prove its invariance under M\"obius transformations.  
\end{itemize}

The rest of the paper is organized as follows. Section \ref{sec:SOTA} reviews existing works on the formalization of complex analysis. Section~\ref{sec:EComplex} presents the extended complex plane \texttt{EComplex}. Section~\ref{sec:LFT} introduces M\"obius transformations, their group structure, their action, and the quotient by scalar multiplication forming $\operatorname{PGL}(2,\mathbb{C})$. Section~\ref{sec:mapping_property} proves a key mapping property of M\"obius transformation. In Section~\ref{sec:cross_ratio}, we define the cross ratio, prove its M\"obius invariance, and state the permutation generator lemmas. Section \ref{sec:discussion} discusses the algebraic methodology and its relation to AI-assisted formalization. We conclude in Section~\ref{sec:conclusion} and outline the possible future work.

\section{Existing works on the formalization of complex analysis}
\label{sec:SOTA}

Several formalization projects in Isabelle/HOL, HOL Light, HOL4, and Lean have developed substantial libraries for complex analysis. We review some of the most relevant work in this section.

\subsection{HOL Light, HOL4, and Isabelle/HOL}

Harrison developed an extensive complex analysis library in HOL Light, including the Cauchy integral formula~\cite{Harrison2007}. Building on this library, he later formalized an analytic proof of the prime number theorem~\cite{Harrison2009}. Cauchy's residue theorem and the argument principle were also formalized in Isabelle/HOL by Li and Paulson~\cite{LiPaulson2016}.

Shi, Guan, and Li wrote a book on the formalization of complex analysis in HOL4, with particular emphasis on applications to the discrete Fourier transform~\cite{Shi2020}.

Mari\'c and Petrovi\'c formalized complex plane geometry in Isabelle/HOL, including M\"obius transformations, circles, and the cross ratio~\cite{maric2014formalizing}. Their development uses homogeneous coordinates and quotient types. In particular, they formalized the group action of M\"obius transformations on circles and straight lines, and the stereographic projection.

\subsection{Lean}

The mathematical library of Lean 4 contains a substantial development of complex analysis on the standard complex plane $\mathbb{C}$~\cite{Lean}. It includes Cauchy's integral theorem for rectangles and circles,  Cauchy integral formula for circles, and the Liouville's theorem. However, notions such as M\"obius transformations  on $\mathbb{C}_\infty$ and the cross ratio involving points at infinity are not yet available in Mathlib. An independent line of work~\cite{ray_Riemann_sphere} implements the Riemann sphere from the perspective of complex manifolds, with emphasis on topological properties of Riemann sphere. In contrast, the work in this paper is more algebraic.

\paragraph{The strong prime number theorem.}
Led by Tao and Kontorovich, the PNT+ project aims to formalize the prime number theorem and its refinements using analytic methods~\cite{KontorovichTao}. The project relies heavily on contour integration, but it works entirely over $\mathbb{C}$ and does not extend the complex plane by adding a point at infinity. An auto-formalization of the strong prime number theorem was carried out in~\cite{StrongPNT}.

\paragraph{The $E_8$ sphere packing formalization.}
Viazovska's work on the sphere packing problem in dimension $8$ was formalized in Lean~\cite{Hariharan2026}. A key component of the proof is modular forms, which are functions on the upper half-plane satisfying transformation laws under the modular group $PSL(2,\mathbb{Z})$. These transformation laws are special cases of M\"obius transformations, and the behaviour of modular forms at the cusp corresponds to their behaviour at the point at infinity. Therefore, a formalization of M\"obius transformations and $\mathbb{C}_\infty$ may provide a useful foundation for future developments on modular forms, automorphic forms, and their applications to number theory and geometry.

Among the works discussed above, the formalization by Mari\'c and Petrovi\'c is closest to the development presented in this paper. Nevertheless, our work differs from theirs in several respects. First, our development is carried out in Lean 4 and makes use of its typeclass mechanism and the existing Mathlib infrastructure. Second, we adopt a simpler representation of the extended complex plane $\mathbb{C}_\infty$, namely as \texttt{Option} $\mathbb{C}$, rather than using homogeneous coordinates. This design choice makes equality of points more straightforward to check, but it also means that some proofs require explicit case distinctions. Such details are usually omitted in informal mathematical presentations; in this work, we make them fully explicit.

\section{The Extended Complex Plane \texttt{EComplex}}
\label{sec:EComplex}
In our formalization, the extended complex plane is represented by the \texttt{Option} type over $\mathbb{C}$. Thus, elements of \texttt{EComplex} are either ordinary complex numbers, represented by \texttt{some z}, or the additional element \texttt{none}. We interpret \texttt{none} as the point at infinity and denote it by the customary symbol~$\infty$.
	
\begin{lstlisting}[language=lean,mathescape=true]
def EComplex := Option $\mathbb{C}$
  notation "$\infty$" => none
\end{lstlisting}

This design choice follows the standard presentation of the extended complex
plane in textbooks such as~\cite[Section 99]{BrownChurchill} and
\cite[Section 2.4]{Ahlfors}. In this presentation, one starts with the complex
plane and adjoins a single extra point, denoted by $\infty$. Arithmetic and
M\"obius transformations are then defined by extending the usual operations on
$\mathbb{C}$ with explicit rules for the point at infinity.

An alternative formalization is to use projective coordinates, as in
\cite{maric2014formalizing}. In that approach, a point of the Riemann sphere is
represented by an equivalence class of nonzero pairs of complex numbers. This
representation is well suited to some algebraic arguments, since M\"obius
transformations can be treated directly as matrix actions on homogeneous
coordinates. However, equality of points is then equality modulo scalar
multiplication, which makes the representation less direct for statements that
refer explicitly to ordinary complex numbers and the distinguished point
$\infty$.

In this paper, we choose the \texttt{Option} representation because it matches
the textbook viewpoint most closely: the Riemann sphere is treated as the
disjoint union of the complex plane and one additional point. This choice makes
the embedding of $\mathbb{C}$ and the special role of $\infty$ transparent,
although it also requires us to handle the exceptional cases involving
$\infty$ explicitly in proofs. Our development therefore follows the usual
textbook proof strategy, while making all such cases precise in Lean.

	
Arithmetic is defined by pattern matching. For example, we define addition by
	
\begin{lstlisting}[language=lean,mathescape=true]
def add : EComplex $\to$ EComplex $\to$ EComplex
  | $\infty$, $\infty$ => $\infty$
  | (z : $\mathbb{C}$), $\infty$ => $\infty$
  | $\infty$, (z : $\mathbb{C}$) => $\infty$
  | (z : $\mathbb{C}$), (w : $\mathbb{C}$) => (z + w : $\mathbb{C}$)
\end{lstlisting}
When either $z$ or $w$ is equal to $\infty$, we define $z+w$ as $\infty$. Otherwise, $z+w$ is the usual complex addition.

Multiplication, subtraction, division and inversion are defined similarly, with conventions that $0 \cdot \infty = \infty$ and $\infty \cdot \infty = \infty$, consistent with the topology of the Riemann sphere. The function 

\begin{lstlisting}[language=lean,mathescape=true]
toComplex : EComplex $\to \mathbb{C}$
\end{lstlisting}
coerce a number in the extended complex plane back to $\mathbb{C}$. We make a choice of mapping  infinity to $0$ (a convenient choice for algebraic manipulation).

\begin{remark} Mathlib already contains a general library for one-point compactifications,
	namely
	\[
	\textsf{Mathlib.Topology.Compactification.OnePoint.Basic}.
	\]
	Although this library also adjoins a new point using the \texttt{Option} type, its
	purpose is primarily topological. It provides the general construction of the one-point compactification and supports reasoning about compactness, neighborhoods, and convergence. Our development has a different goal. We formalize the algebraic and geometric operations on $\mathbb{C}_\infty$.
\end{remark}

\section{M\"obius Transformations}
\label{sec:LFT}

\subsection{Definition and Action}

A M\"obius transformation is represented by a structure containing the four coefficients and a proof of non-degeneracy. In the Lean code, we use the name \texttt{LinearFracTrans}, reflecting the equivalent term \emph{linear fractional transformation} commonly used for M\"obius transformations.

\begin{lstlisting}[language=lean,mathescape=true]
structure LinearFracTrans where
  a : $\mathbb{C}$
  b : $\mathbb{C}$
  c : $\mathbb{C}$
  d : $\mathbb{C}$
  determinant_ne_zero : a * d - b * c $\neq$ 0
\end{lstlisting}
	
Its action on an extended complex number is defined by case analysis on whether $c=0$ and whether the input is the pole $-d/c$. We define a function $f(z) = (az+b)/(cz+d)$ on the extended complex plane. 

\noindent When $c=0$, 
$$
f(z) \triangleq  \begin{cases}
	(a/d)z + (b/d) & \text{ if } z \neq \infty,\\
	\infty & \text{ if } z = \infty.
\end{cases}
$$
When $c\neq 0$, 
$$
f(z) \triangleq  \begin{cases} 
	\frac{az+b}{cz+d} & \text{ if } z\neq -d/c,\ z \neq \infty\\
	\infty & \text{ if } z = -d/c \\
	a/c & \text{ if } z = \infty.
\end{cases}
$$
The implementation in Lean is as follows:

\newpage 
	
\begin{lstlisting}[language=lean,mathescape=true]
def apply (f : LinearFracTrans) (z : EComplex) : EComplex :=
  if f.c = 0 then
    match z with
    | some z => some ((f.a / f.d) * z + (f.b / f.d))
    | $\infty$ => $\infty$
  else
    match z with
    | some z =>
      if z = -f.d / f.c then $\infty$
      else some ((f.a * z + f.b) / (f.c * z + f.d))
    | $\infty$ => some (f.a / f.c)
\end{lstlisting}
	
We also equip \texttt{LinearFracTrans} with a \texttt{CoeFun} instance so that $f\,z$ can be written instead of \texttt{apply f z}.
	
\subsection{Group Structure and Group Action}
	
Composition is defined as matrix multiplication. In addition to the computation of the parameters $a$, $b$, $c$, and $d$ of the composed function $f\circ g$, we also need to provide a proof that the determinant of the composed function is nonzero.
	
\begin{lstlisting}[language=lean,mathescape=true]
def comp (f g : LinearFracTrans) : LinearFracTrans where
  a := f.a * g.a + f.b * g.c
  b := f.a * g.b + f.b * g.d
  c := f.c * g.a + f.d * g.c
  d := f.c * g.b + f.d * g.d
  determinant_ne_zero := by
    have h3 : (f.a * g.a + f.b * g.c) * (f.c * g.b + f.d * g.d) - (f.a * g.b + f.b * g.d) * (f.c * g.a + f.d * g.c) =
              (f.a * f.d - f.b * f.c) * (g.a * g.d - g.b * g.c) 
      := by ring
    rw [h3]; 
    exact mul_ne_zero f.determinant_ne_zero g.determinant_ne_zero
\end{lstlisting}
	
The identity is represented by \texttt{id} with coefficients $(1,0,0,1)$, and the inverse \texttt{inv} is given by the adjugate matrix divided by the determinant. We prove the necessary algebraic lemmas:
\begin{itemize}
	\item \texttt{lft\_mul\_assoc}: associativity of composition,
	\item \texttt{lft\_one\_mul} and \texttt{lft\_mul\_one}: identity laws,
	\item \texttt{lft\_mul\_left\_inv} (and \texttt{lft\_mul\_right\_inv}): inverse laws.
\end{itemize}

With these lemmas, we instantiate the \texttt{Group} typeclass for \texttt{LinearFracTrans}:

\begin{lstlisting}[language=lean,mathescape=true]
instance : Group (LinearFracTrans) where
  mul := comp
  mul_assoc := by
    intro f g h
    dsimp [$\cdot * \cdot$]
    exact lft_mul_assoc f g h
  one := LinearFracTrans.id
  one_mul := by
    dsimp [$\cdot * \cdot$]
    apply lft_one_mul
  mul_one := by
    dsimp [$\cdot * \cdot$]
    apply lft_mul_one
  inv := LinearFracTrans.inv
  inv_mul_cancel := by
    dsimp [$\cdot * \cdot$]
    apply lft_mul_left_inv
\end{lstlisting}
	
The group operation \texttt{mul} (denoted \texttt{·*·}) is exactly \texttt{comp}.

To turn this group into a group action on \texttt{EComplex}, we prove the fundamental compatibility lemma:
	
\begin{lstlisting}[language=lean,mathescape=true]
theorem comp_equivalent (z : EComplex) (f g : LinearFracTrans) :
  (comp f g) z = f (g z)
\end{lstlisting}
	
The proof is a large case distinction (over whether $f.c = 0$, $g.c = 0$, and whether $z$ is a pole), but each subcase is resolved by algebraic simplification using \texttt{field\_simp} and \texttt{ring}. With this lemma, we register \texttt{LinearFracTrans} as a \texttt{MulAction} on \texttt{EComplex}:

\begin{lstlisting}[language=lean,mathescape=true]
instance : MulAction LinearFracTrans EComplex where
  one_smul := id_apply
  mul_smul f g z := comp_equivalent z f g
\end{lstlisting}
	
Now we can write $f \bullet z$ (or simply $f\,z$ thanks to the \texttt{CoeFun} instance) and use the standard group action theorems from Mathlib.

\subsection{Quotient Group: From Linear Fractional Transformations to \(\operatorname{PSL}(2,\mathbb{C})\)}
	
	
A single M\"obius transformation is not represented by a unique quadruple of coefficients. Indeed, if all four coefficients are multiplied by the same nonzero complex scalar, then the resulting fractional expression defines the same function on the extended complex plane,
\[
\frac{az+b}{cz+d}
=
\frac{(ka)z+kb}{(kc)z+kd}
\qquad (k \in \mathbb{C},\ k \neq 0).
\]
Thus, the structure \texttt{LinearFracTrans} should be understood as a
representative of a M\"obius transformation rather than as the transformation
itself. To identify different representatives that define the same map, we
introduce a proportionality relation on \texttt{LinearFracTrans}. Two
representatives are proportional if their four coefficients differ by
multiplication by a common nonzero scalar.
	
\begin{lstlisting}[language=lean,mathescape=true]
def Proportional (f1 f2 : LinearFracTrans) : Prop :=
  $\exists$ (k : $\mathbb{C}$), k $\neq$ 0 $\land$
    f2.a = k * f1.a $\land$
    f2.b = k * f1.b $\land$
    f2.c = k * f1.c $\land$
    f2.d = k * f1.d
\end{lstlisting}
This relation identifies exactly those coefficient quadruples that differ by a common nonzero scalar factor, and hence represent the same projective matrix. The quotient by this relation is the formal object corresponding to the M\"obius group.
	
A key result connects functional equality with proportionality:

\begin{lstlisting}[language=lean,,mathescape=true]
theorem funEq_iff_proportional (f1 f2 : LinearFracTrans) :
  FunEq f1 f2 $\leftrightarrow$ Proportional f1 f2
\end{lstlisting}

Thus, two transformations that are proportional give the same function on \(\mathbb{C}_\infty\). Using this, we define an equivalence relation \texttt{lftEquiv} (proportionality) and form the quotient type:
	
\begin{lstlisting}[language=lean]
def lftsetoid : Setoid LinearFracTrans := { r := lftEquiv
  iseqv := {
    refl := lftproportional_refl
    symm := lftproportional_symm
    trans := lftproportional_trans
  }}   -- equivalence by proportionality
\end{lstlisting}
	
\begin{lstlisting}[language=lean,mathescape=true]
def quotMul : Quotient lftsetoid $\to$ Quotient lftsetoid $\to$ Quotient lftsetoid := ...
def quotInv : Quotient lftsetoid $\to$ Quotient lftsetoid := ...
def quotOne : Quotient lftsetoid := $\llbracket$1$\rrbracket$
\end{lstlisting}
	



The quotient in the Lean code reflects the fact that a M\"obius transformation
is represented by a projective equivalence class of
invertible matrices. Indeed, the coefficients $(a,b,c,d)$ form the matrix
\[
\begin{pmatrix}
	a & b \\
	c & d
\end{pmatrix},
\]
whose determinant is $ad-bc$. The condition of nonvanishing determinant says that this matrix
lies in $\operatorname{GL}(2,\mathbb{C})$. Multiplying the matrix by a nonzero
scalar does not change the induced map on $\mathbb{C}_\infty$, so the actual
group of transformations is
\[
\operatorname{GL}(2,\mathbb{C})/\mathbb{C}^{\times}
=
\operatorname{PGL}(2,\mathbb{C}).
\]
This is implemented in Lean by the quotient \texttt{Quotient lftsetoid}:
\begin{lstlisting}[language=lean]
instance : Group (Quotient lftsetoid) := { ... }
\end{lstlisting}
Over $\mathbb{C}$, this group is naturally isomorphic to
$\operatorname{PSL}(2,\mathbb{C})$, and it is the full automorphism group of the
Riemann sphere.
	

\section{Existence and Uniqueness Determined by Three Points}
\label{sec:mapping_property}
A classic theorem in complex variables states that a M\"obius transformation is uniquely determined by its images of three distinct points. We prove this in two steps.

\subsection{Fixing Three Points Implies Identity}

First, we show that if a transformation fixes three distinct points, it must be the identity.

\begin{lemma}
	If $z_1$, $z_2$ and $z_3$ are there distinct points in the extended complex plane, and $f(z)$ is a linear fractional transformation satisfying $f(z_j)=z_j$, for $j= 1,2,3$, then 
	$$f(z) = z$$
	for all $z\in{\mathbb{C}_\infty}$.
	\label{lemma:uniqueness}
\end{lemma}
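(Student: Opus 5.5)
The plan is to show that $f$ has the same coefficients as the identity up to a scalar. Then $f$ acts as the identity by \texttt{funEq\_iff\_proportional}, or directly from the definition of \texttt{apply}. Write $f=(a,b,c,d)$. The fixed-point equations become polynomial identities in the coefficients, and three distinct solutions force $c=0$, $b=0$, $a=d$. Because the points live in \texttt{Option}~$\mathbb{C}$, I first split into two cases: either one of $z_1,z_2,z_3$ equals $\infty$, or all three are finite. By symmetry of the hypothesis (a relabelling of the points), I may assume that in the first case $z_3=\infty$.

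\emph{Case 1: all three points finite.} For a finite $z_j$ with $f(z_j)=z_j$ finite, $z_j$ is not the pole. Unfolding \texttt{apply} in both branches $c=0$ and $c\neq 0$ gives the quadratic relation
\[
c z_j^2 + (d-a) z_j - b = 0 .
\]
When $c=0$ this comes from $(a/d)z_j+b/d=z_j$; note $d\neq 0$ there, since $ad\neq 0$. When $c\ne0$ it comes from $(az_j+b)=z_j(cz_j+d)$, using $cz_j+d\neq0$. The polynomial $cX^2+(d-a)X-b$ has degree at most $2$ and three distinct roots, so it vanishes identically: $c=0$, $a=d$, $b=0$. Rather than invoke polynomial root counting, I would extract this directly. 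Subtract the relations for $z_1,z_2$ and cancel $z_1-z_2\neq0$ to get $c(z_1+z_2)+(d-a)=0$. Do the same for $z_1,z_3$, then subtract and cancel $z_2-z_3$ to get $c=0$. This gives $d=a$, and then $b=0$.

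\emph{Case 2: $z_3=\infty$, with $z_1,z_2$ finite.} If $c\neq0$, then $f(\infty)=a/c$ is finite, which contradicts $f(\infty)=\infty$. So $c=0$, and the relation above holds for $z_1,z_2$. It reads $(d-a)z_j=b$ for $j=1,2$, and cancelling $z_1-z_2$ gives $d=a$ and hence $b=0$.

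In both cases $c=b=0$ and $a=d\ne0$, so $f(z)=(a/a)z+0/a=z$ for finite $z$ and $f(\infty)=\infty$, directly from the $c=0$ branch of \texttt{apply}. I expect the main obstacle to be bookkeeping rather than mathematics. In Lean one must perform the WLOG on which point is $\infty$ (or enumerate all three positions), and discharge the pole branch: if $z_j=-d/c$ then $f(z_j)=\infty\neq z_j$, a contradiction. One must also feed \texttt{field\_simp} the nonvanishing facts $d\ne0$ and $cz_j+d\neq0$. The cancellation steps can be handled with \texttt{mul\_left\_cancel\_0} together with \texttt{sub\_ne\_zero}, followed by \texttt{linear\_combination}.
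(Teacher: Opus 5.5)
Your proposal is correct and follows essentially the same route as the paper. You split on whether $\infty$ is among the fixed points; when it is, $f(\infty)=\infty$ forces $c=0$ and the two finite fixed points give $a=d$, $b=0$, and when all three points are finite, the quadratic $cz^2+(d-a)z-b=0$ having three distinct roots forces $c=0$. The only difference is cosmetic: you derive the quadratic relation uniformly in both branches $c=0$ and $c\neq0$ and eliminate the coefficients directly, whereas the paper argues by contradiction from $c\neq0$ and then reduces to the affine case.
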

	
This is formulated in Lean as follows.

\begin{lstlisting}[language=lean,mathescape=true]
lemma fixes_three_is_id 
  (f : LinearFracTrans) 
  (z1 z2 z3 : EComplex)
  (hz : z1 $\neq$ z2 $\land$ z2 $\neq$ z3 $\land$ z3 $\neq$ z1)
  (hfix1 : f z1 = z1) 
  (hfix2 : f z2 = z2) 
  (hfix3 : f z3 = z3) :
    FunEq f id
\end{lstlisting}
	
The proof distinguishes whether $\infty$ is among the fixed points. If $\infty$ is fixed, then $c=0$ and the transformation becomes affine $z \mapsto (a/d)z + b/d$. Using the two finite fixed points, we deduce $a/d = 1$ and $b=0$. If all three fixed points are finite, we derive quadratic equations that force $c=0$ (by showing a non-zero $c$ would lead to three distinct roots of a quadratic, impossible), then reduce to the affine case.
	
\subsection{Uniqueness and Existence}

The characterization of linear fractional transformation from action of three distinct points are stated in the theorem below.

\begin{theorem}
	Given any three distinct points $z_1$, $z_2$ and $z_3$ and any three distinct points $w_1$, $w_2$ and $w_3$, all in the extended complex plane ${\mathbb{C}}_\infty$, there is a unique linear fractional transformation that maps $z_1$ to $w_1$, $z_2$ to $w_2$, and $z_3$ to $w_3$. \label{thm:unique_exist}
\end{theorem}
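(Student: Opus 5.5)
The plan is to reduce everything to the special case in which the target triple is the standard triple $(0,1,\infty)$. Existence then follows by composition and inversion in the group \texttt{LinearFracTrans}. Uniqueness follows from Lemma~\ref{lemma:uniqueness} together with the compatibility lemma \texttt{comp\_equivalent}. Since Lemma~\ref{lemma:uniqueness} only gives equality as functions, uniqueness will be understood up to \texttt{FunEq}, which by \texttt{funEq\_iff\_proportional} is the same as up to proportionality, that is, as a single element of \texttt{Quotient lftsetoid}.

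\textbf{Step 1 (normalizing map).} For three distinct points $z_1,z_2,z_3\in\mathbb{C}_\infty$, construct an explicit $T_{z}$ with $T_z(z_1)=0$, $T_z(z_2)=1$, $T_z(z_3)=\infty$. When all three points are finite, take
\[
T_z(z)=\frac{(z-z_1)(z_2-z_3)}{(z-z_3)(z_2-z_1)},
\]
that is, $a=z_2-z_3$, $b=-z_1(z_2-z_3)$, $c=z_2-z_1$, $d=-z_3(z_2-z_1)$. Its determinant is $(z_2-z_3)(z_2-z_1)(z_1-z_3)\neq 0$. The degenerate cases use the usual limiting formulas:
\begin{itemize}
\item if $z_1=\infty$, take $T_z(z)=(z_2-z_3)/(z-z_3)$;
\item if $z_2=\infty$, take $T_z(z)=(z-z_1)/(z-z_3)$;
\item if $z_3=\infty$, take $T_z(z)=(z-z_1)/(z_2-z_1)$.
\end{itemize}
Each of these has a nonzero determinant by distinctness. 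Checking the three values requires unfolding \texttt{apply} and doing a case split on $c=0$ and on whether the input is the pole $-d/c$. Distinctness supplies the needed nonvanishing facts for \texttt{field\_simp}.

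\textbf{Step 2 (existence).} Build $T_w$ in the same way and set $f=T_w^{-1}\cdot T_z$, using the group structure. By \texttt{comp\_equivalent} and the fact that the inverse acts as the inverse function (from the \texttt{MulAction} instance), we get
\[
f(z_j)=T_w^{-1}(T_z(z_j))=T_w^{-1}(T_w(w_j))=w_j .
\]

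\textbf{Step 3 (uniqueness).} Suppose $f$ and $g$ both send $z_j\mapsto w_j$. Then $h=g^{-1}\cdot f$ satisfies $h(z_j)=g^{-1}(w_j)=z_j$ for $j=1,2,3$. By Lemma~\ref{lemma:uniqueness}, $h$ acts as the identity. Hence for every $z$,
\[
f(z)=g(h(z))=g(z),
\]
so \texttt{FunEq f g}, and by \texttt{funEq\_iff\_proportional} the two maps are proportional.

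\textbf{Main obstacle.} I expect the hardest part to be Step 1: defining $T_z$ as a single Lean definition by matching on which of $z_1,z_2,z_3$ is $\infty$, and then verifying the three values in each branch. At most one point can be $\infty$, which gives four branches, and in each branch the pole test inside \texttt{apply} must be discharged. Before calling \texttt{field\_simp}, I would first prove small helper lemmas computing \texttt{apply} at the pole and at $\infty$, so that each branch reduces to a single \texttt{field\_simp}/\texttt{ring} goal.
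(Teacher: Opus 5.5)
Your proposal is correct and follows essentially the same route as the paper. Uniqueness comes from $h=g^{-1}\circ f$ fixing $z_1,z_2,z_3$ and Lemma~\ref{lemma:uniqueness}, and existence comes from composing the normalizing map to $(0,1,\infty)$ with the inverse of the analogous map for the $w_j$, using the same three boundary-case formulas. Your explicit all-finite formula and your reading of uniqueness as \texttt{FunEq} (equivalently, proportionality) match what the paper's \texttt{lft\_uniqueness} actually proves.
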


Uniqueness follows immediately from Lemma~\ref{lemma:uniqueness}.
	
\begin{lstlisting}[language=lean,mathescape=true]
theorem lft_uniqueness 
  (f g : LinearFracTrans) 
  (z1 z2 z3 : EComplex)
  (hdist : z1 $\neq$ z2 $\land$ z2 $\neq$ z3 $\land$ z3 $\neq$ z1)
  (h1 : f z1 = g z1) 
  (h2 : f z2 = g z2) 
  (h3 : f z3 = g z3) :
    FunEq f g
\end{lstlisting}
	
\begin{proof} consider $h = g^{-1} \circ f$; it fixes $z_1,z_2,z_3$, hence $h$ is equal to the identity function on the extended complex plane. So $f = g$.
\end{proof}

For existence, we explicitly construct a transformation sending any three points $z_1,z_2,z_3$ to $0,1,\infty$ (by case analysis on which of them is $\infty$), and then compose with the inverse of a similar map for $w_1,w_2,w_3$:

There are three boundary cases to consider. When $z_1=\infty$ and $z_2$ and $z_3$ are finite, we can map $\infty$ to 0, $z_2$ to 1, and $z_3$ to $\infty$ by
\begin{equation}
	f(z) = \frac{z_2-z_3}{z-z_3}.
	\label{eq:LFT2}
\end{equation}
When $z_2=\infty$ and $z_1$ and $z_3$ are finite, we can map $z_1$ to 0, $\infty$ to 1, and $z_3$ to $\infty$ by
\begin{equation}
	f(z) = \frac{z-z_1}{z-z_3}.
	\label{eq:LFT3}
\end{equation}
Finally, when $z_3=\infty$ and $z_1$ and $z_2$ are finite, we can map $z_1$ to 0, $z_2$ to 1, and $\infty$ to $\infty$ by
\begin{equation}
	f(z) = \frac{z-z_1}{z_2-z_1}.
	\label{eq:LFT4}
\end{equation}
	
\begin{lstlisting}[language=lean,mathescape=true]
theorem exist_LFT_mapping_three_point 
  (z1 z2 z3 w1 w2 w3 : EComplex)
  (hz : z1 $\neq$ z2 $\land$ z2 $\neq$ z3 $\land$ z3 $\neq$ z1)
  (hw : w1 $\neq$ w2 $\land$ w2 $\neq$ w3 $\land$ w3 $\neq$ w1) :
  $\exists$ f : LinearFracTrans, f z1 = w1 $\land$ f z2 = w2 
    $\land$ f z3 = w3
\end{lstlisting}
	
Combined with uniqueness, this proves Theorem~\ref{thm:unique_exist}.

\section{Cross Ratio: Definition, Invariance, and Permutation Generators}
\label{sec:cross_ratio}

\subsection{Definition}

Given four  complex numbers $z_0$, $z_1$, $z_2$ and $z_3$, we define the {\em cross ratio} as the image of $z_0$ under the linear transformation that maps $z_1$ to 0, $z_2$ to 1, and $z_3$ to $\infty$. We denote the cross ratio as
$$
[z_0,z_1,z_2,z_3] \triangleq 
\begin{cases}		
	\frac{z_0 - z_1}{z_0-z_3} \cdot \frac{z_2-z_3}{z_2-z_1} & \text{ if } z_0 \neq \infty, \\
	\frac{z_2-z_3}{z_2-z_1} & \text{ if } z_0 = \infty,
\end{cases}
$$
when $z_1$, $z_2$ and $z_3$ are all finite (see p.310 in~\cite{BrownChurchill}). If one of $z_1$, $z_2$ and $z_3$ is $\infty$, the cross ratio is computed by \eqref{eq:LFT2}, \eqref{eq:LFT3} or \eqref{eq:LFT4}.

In Lean, we define the cross ratio of four points by pattern matching on which of them are $\infty$:
\\
\begin{lstlisting}[language=lean,mathescape=true]
def cross_ratio (z0 z1 z2 z3 : EComplex) : EComplex :=
  match z1, z2, z3 with
  | some z1, some z2, some z3 =>
    match z0 with
    | some z0 => (z0 - z1) / (z0 - z3) * ((z2 - z3) / (z2 - z1))
    | none => (z2 - z3) / (z2 - z1)
  | none, some z2, some z3 => (z2 - z3) / (z0 - z3)
  | some z1, none, some z3 => (z0 - z1) / (z0 - z3)
  | some z1, some z2, none => (z0 - z1) / (z2 - z1)
  | _, _, _ => none   -- junk case (never reached under distinctness)
\end{lstlisting}
	
This definition automatically handles the limiting cases when one of the arguments is $\infty$, following the standard conventions.

\subsection{M\"obius Invariance}

The main theorem states that the cross ratio is invariant under any M\"obius transformation.

\begin{theorem}
	Let $z_0$, $z_1$, $z_2$ and $z_3$ be pairwise distinct numbers in $\mathbb{C}_\infty$, and $f(z)$ be a linear fractional transformation mapping $z_k$ to $z_k'$, for $k=0,1,2,3$. Then
	$$
	[z_0,z_1,z_2,z_3] = [z_0', z_1', z_2', z_3'].
	$$
\end{theorem}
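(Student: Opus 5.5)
The plan is to use the characterization of the cross ratio as the image of $z_0$ under the normalizing map, together with the uniqueness result in Theorem~\ref{thm:unique_exist}.

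\textbf{Step 1: the normalizing lemma.} For pairwise distinct $z_1,z_2,z_3$, let $T_{z_1,z_2,z_3}$ be the explicit linear fractional transformation from the existence proof that sends $z_1\mapsto 0$, $z_2\mapsto 1$, $z_3\mapsto\infty$. When all three points are finite it has coefficients $(a,b,c,d)=\bigl(z_2-z_3,\,-z_1(z_2-z_3),\,z_2-z_1,\,-z_3(z_2-z_1)\bigr)$; otherwise it is given by \eqref{eq:LFT2}, \eqref{eq:LFT3} or \eqref{eq:LFT4}. I would prove
\[
[z_0,z_1,z_2,z_3] = T_{z_1,z_2,z_3}(z_0)
\]
for every $z_0\in\mathbb{C}_\infty$ distinct from $z_1,z_2,z_3$. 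The proof is a case split mirroring the pattern match in \texttt{cross\_ratio}. First split on which of $z_1,z_2,z_3$ is $\infty$ (at most one, by distinctness). Then split on whether $z_0=\infty$, unfolding \texttt{apply} and checking whether $c=0$ and whether $z_0$ is the pole. Since $z_0\neq z_3$, the pole branch $z_0=-d/c=z_3$ is excluded. Each remaining branch closes by \texttt{field\_simp} and \texttt{ring}, using nonvanishing of the differences $z_i-z_j$. I expect this lemma to be the main obstacle. The difficulty is not conceptual: it is the bookkeeping of roughly a dozen cases, and the mismatch between the \texttt{EComplex} arithmetic in the definition (with conventions like $0\cdot\infty=\infty$) and honest complex division. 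To control this, I would first prove small simp lemmas rewriting \texttt{some}-arithmetic into \texttt{some} of complex arithmetic whenever denominators are nonzero.

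\textbf{Step 2: reduce invariance to uniqueness.} Write $z_k'=f(z_k)$. Because $f$ is a bijection of $\mathbb{C}_\infty$ (it has the inverse $f^{-1}$, via \texttt{comp\_equivalent}), the points $z_k'$ are pairwise distinct. Put $g=T_{z_1',z_2',z_3'}\circ f$. By \texttt{comp\_equivalent}, $g$ sends $z_1\mapsto 0$, $z_2\mapsto 1$, $z_3\mapsto\infty$, and so does $T_{z_1,z_2,z_3}$. By \texttt{lft\_uniqueness}, the two maps agree on all of $\mathbb{C}_\infty$; in particular they agree at $z_0$.

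\textbf{Step 3: conclude.} Combining Step 1 with Step 2,
\[
[z_0',z_1',z_2',z_3'] = T_{z_1',z_2',z_3'}(f(z_0)) = g(z_0) = T_{z_1,z_2,z_3}(z_0) = [z_0,z_1,z_2,z_3].
\]
Step 1 is applied twice here, once to the primed points (using $z_0'\neq z_k'$) and once to the original points. Distinctness of $z_0$ from the other three points is used only to exclude the pole branch in Step 1. If those exclusions become awkward, a fallback is to check $z_0=\infty$ and $z_0$ finite separately.
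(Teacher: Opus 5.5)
Your proposal is correct, but it takes a different route from the paper. The paper proves invariance by direct computation. It splits on whether $f.c=0$, then on which of $z_0,\dots,z_3$ are $\infty$ and where the pole of $f$ lies, and closes each subcase with a field identity, the core one being \texttt{cross\_ratio\_lft\_num\_denom}. This costs roughly 1400 lines and dozens of subcases.

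You instead first show that the pattern-matched \texttt{cross\_ratio} agrees with the paper's informal definition, $[z_0,z_1,z_2,z_3]=T_{z_1,z_2,z_3}(z_0)$. You then obtain invariance structurally: $T_{z_1',z_2',z_3'}\circ f$ and $T_{z_1,z_2,z_3}$ agree on $z_1,z_2,z_3$, so \texttt{lft\_uniqueness} makes them agree at $z_0$. This is Ahlfors' textbook argument.

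What your route buys is that every case distinction depending on $f$ (whether $c=0$, where the pole sits, which images are $\infty$) has already been paid for inside \texttt{comp\_equivalent} and \texttt{fixes\_three\_is\_id}. The only new computation is your normalizing lemma, whose case split does not involve $f$ at all. It has five main branches: which of $z_1,z_2,z_3$ is $\infty$, if any, with $z_0=\infty$ possible only in the all-finite configuration. As a bonus, it formally confirms that the Lean definition matches the informal one. The paper's direct route, by contrast, does not depend on the three-point uniqueness theorem and yields explicit, reusable identities such as \texttt{cross\_ratio\_lft\_num\_denom}, at the price of far more bookkeeping.

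One point you handle correctly but should keep explicit in Lean: to apply Step~1 to the primed points, they must be pairwise distinct. This follows from the exact group law $f^{-1}\cdot f=1$ together with \texttt{comp\_equivalent}.
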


This theorem is formulated in Lean as
	
\begin{lstlisting}[language=lean,mathescape=true]
theorem cross_ratio_invariant 
  (f : LinearFracTrans) 
  (z0 z1 z2 z3 : EComplex)
  (h_distinct : List.Pairwise ($\cdot \neq \cdot$) [z0, z1, z2, z3]) :
  cross_ratio (f z0) (f z1) (f z2) (f z3) = cross_ratio z0 z1 z2 z3
\end{lstlisting}
	

The proof of cross-ratio invariance is a good example of the additional precision required by formalization. In textbooks, the invariance of the cross ratio under M\"obius transformations is usually presented as a short algebraic calculation. The exceptional cases, especially those involving the point at infinity or the pole of the transformation, are often treated informally or absorbed into notation. In Lean, these cases cannot be suppressed: the action at
$\infty$, the behavior at the pole, and the possible occurrence of $\infty$ among the four arguments must all be handled explicitly.

Our proof first splits on whether the coefficient $c$ is zero or nonzero. The case $f.c=0$ corresponds to an affine transformation, while $f.c\neq 0$ is the genuinely fractional case. In each branch, we then perform further case analysis according to which of
$z_0,z_1,z_2,z_3$ are equal to $\infty$. Each subcase is reduced to an identity in the complex numbers. The central finite-point calculation is isolated in the following lemma:

\begin{lstlisting}[language=lean,mathescape=true]
lemma cross_ratio_lft_num_denom 
  (f : LinearFracTrans) (z0 z1 z2 z3 : $\mathbb{C}$)
  (h0 : f.c * z0 + f.d $\neq$ 0) (h1 : f.c * z1 + f.d $\neq$ 0)
  (h2 : f.c * z2 + f.d $\neq$ 0) (h3 : f.c * z3 + f.d $\neq$ 0) :
  let fzi := (f.a * zi + f.b) / (f.c * zi + f.d)
  (f z0 - f z1) / (f z0 - f z3) * ((f z2 - f z3) / (f z2 - f z1)) =
  (z0 - z1) / (z0 - z3) * ((z2 - z3) / (z2 - z1))
\end{lstlisting}


The full Lean proof is about 1400 lines. Its length reflects the explicit verification of cases that are usually left implicit. 
Our formal proof records the complete boundary behavior of the cross ratio on $\mathbb{C}_\infty$ and guarantees that no case involving infinity or a pole of the transformation has been overlooked.


\subsection{Permutation Generators}

Permuting the four arguments transforms the cross ratio by one of six elementary fractional expressions. More precisely, if
\[
\lambda = [z_0,z_1;z_2,z_3],
\]
then the possible values obtained by permuting the four points are
\begin{equation}
	\lambda,\quad
	1-\lambda,\quad
	\frac{1}{\lambda},\quad
	\frac{1}{1-\lambda},\quad
	\frac{\lambda-1}{\lambda},\quad
	\frac{\lambda}{\lambda-1}.
	\label{eq:6_values}
\end{equation}
Thus, although there are $24$ permutations of four points, they give rise to only six distinct values of the cross ratio. This classical fact reflects the existence of a normal subgroup of $S_4$ acting trivially on the cross ratio, so that the induced action factors through a group of order six.

In our formalization, we formalize two generating transformations. 
\begin{lstlisting}[language=lean,mathescape=true]
lemma cross_ratio_swap_12_complement 
  (z0 z1 z2 z3 : EComplex)
  (h_distinct : List.Pairwise ($\cdot \neq \cdot$) [z0, z1, z2, z3]) :
  cross_ratio z0 z2 z1 z3 = 1 - cross_ratio z0 z1 z2 z3
\end{lstlisting}

\begin{lstlisting}[language=lean,mathescape=true]
lemma cross_ratio_swap_13_inv 
  (z0 z1 z2 z3 : EComplex)
  (h_distinct : List.Pairwise ($\cdot \neq \cdot$) [z0, z1, z2, z3]) :
  cross_ratio z0 z3 z2 z1 = 1 / cross_ratio z0 z1 z2 z3
\end{lstlisting}

The first corresponds to exchanging the second and third arguments, and sends the cross ratio to its complement. The second corresponds to exchanging the second and fourth arguments, and sends the cross ratio to its inverse.

These two rules generate the full set of six values in \eqref{eq:6_values}.
Indeed, repeated application of the two transformations
\[
\lambda \mapsto 1-\lambda
\qquad\text{and}\qquad
\lambda \mapsto \frac{1}{\lambda}
\]
produces exactly these six expressions. The proofs follow the same pattern: case analysis on which arguments are $\infty$ and then algebraic simplification using the field laws.

\section{Discussion: Algebraic Formalization and AI Assistance}
\label{sec:discussion}

Our choice of a purely algebraic representation (the \texttt{Option} type for $\mathbb{C}_\infty$ and explicit matrix-like coefficients for M\"obius transformations) stands in contrast to the approach of homogeneous coordinates or quotient types. In our development, the heavy use of \texttt{field\_simp}, \texttt{ring}, and \texttt{norm\_cast} automates most algebraic manipulations, leaving only the structural case splits to be handled manually.

Nevertheless, the case splits are numerous: for \texttt{comp\_equivalent} we have 8 main branches, and for \texttt{cross\_ratio\_invariant} we have dozens of subcases. This is precisely the kind of repetitive, structurally predictable work that could be automated by a large language model or by a domain-specific tactic in Lean. We envision that future versions of this formalization could be generated partially by AI, with the human formalizer focusing on the core algebraic identities. Our work provides a sample of case-split proofs that could be used to train such models.

\section{Conclusion and Future Work}
\label{sec:conclusion}

We have presented a complete, machine-checked formalization of the extended complex plane, M\"obius transformations, and the cross ratio in Lean 4. Our development comprises approximately 6,000 lines of Lean 4 code (including definitions, lemmas, and proofs), with over 40 definitions and 150 lemmas or theorems. The key results are:
\begin{itemize}
	\item A simple model of $\mathbb{C}_\infty$ using \texttt{Option $\mathbb{C}$}.
	
	\item Linear fractional transformations as a group acting on $\mathbb{C}_\infty$, and and a realization of the symmetric group $S_4$ as a subgroup of the group of~LFT.
	
	\item A proof that a M\"obius transformation is uniquely determined by its action on three points, together with an explicit existence construction.
	
	\item A definition of the cross ratio covering points at infinity, a proof of its M\"obius invariance. 
\end{itemize}

This work lays a foundation for further formalization in complex analysis, conformal geometry, and physics. Immediate future directions include:
\begin{itemize}
	\item \textbf{Generalized circles (circlines)}: formalize the notion that M\"obius transformations map circles and lines to circles and lines, and provide a unified representation using Hermitian matrices.
	\item \textbf{Conformality}: prove that M\"obius transformations are conformal (angle-preserving) using either the complex derivative or the cross-ratio definition of angles. This will connect our work to differential geometry and complex analysis.
	\item \textbf{Applications in physics}: implement the isomorphism between $PSL(2,\mathbb{C})$ and the Lorentz group $SO^+(3,1)$ using Pauli matrices, formalizing the transformation of the celestial sphere in special relativity.
	\item \textbf{Tactic development}: create a tactic that automatically generates case-split proofs for cross-ratio invariance and other similar properties, leveraging the structural patterns observed in our manual proofs.
\end{itemize}
	
All code is available at \url{https://github.com/fubinyan/ComplexVariables/blob/main/ComplexAnalysis/Chapter2_2.lean}.

	
	
\end{document}